\newtheorem{theorem}{Theorem}[section]
\newtheorem{proposition}[theorem]{Proposition}
\newtheorem{definition}[theorem]{Definition}
\newtheorem{remark}[theorem]{Remark}
\newenvironment{proof}[1][Proof]{\textbf{#1.} }{\ \rule{0.5em}{0.5em}}
\newcommand{\be}{\begin{equation}}
\newcommand{\ee}{\end{equation}}
\newcommand{\bes}{\begin{equation*}}
\newcommand{\ees}{\end{equation*}}
\newcommand{\cH}{\mathcal{H}}
\newcommand{\cK}{\mathcal{K}}
\newcommand{\cS}{\mathcal{S}}
\newcommand{\Rp}{\mathbb{R}_+}
\begin{document}
\title{Representing a product system representation as a contractive
semigroup and applications to regular isometric dilations}

\author{Orr Moshe Shalit}
\maketitle

\begin{abstract}
{In this paper we propose a new technical tool for analyzing
representations of Hilbert $C^*$-product systems. Using this tool,
we give a new proof that every doubly commuting representation
over $\mathbb{N}^k$ has a regular isometric dilation, and we also
prove sufficient conditions for the existence of a regular
isometric dilation of representations over more general
subsemigroups of $\Rp^k$.}
\end{abstract}

\section{Introduction, preliminaries, notation}
\subsection{Background. Correspondences, product systems and representations}
In the following paragraphs we review the definitions of our main objects of study. The reader familiar with $C^*$-correspondences, product systems of correspondences and representations of product systems,
may skip to subsection \ref{subsec:about}.

\begin{definition}
Let $A$ be a $C^*$ algebra. A \emph{Hilbert $C^*$-correspondences over $A$} is a (right) Hilbert $A$-module $E$
which carries an adjointable, left action of $A$.
\end{definition}
The following notion of representation of a $C^*$-correspondence was studied extensively in \cite{MS98}, and turned out to be a very useful tool.
\begin{definition}
Let $E$ be a $C^*$-correspondence over $A$, and let $H$ be a
Hilbert space. A pair $(\sigma, T)$ is called a \emph{completely
contractive covariant representation} of $E$ on $H$ (or, for
brevity, a \emph{c.c. representation}) if
\begin{enumerate}
    \item $T: E \rightarrow B(H)$ is a completely contractive linear map;
    \item $\sigma : A \rightarrow B(H)$ is a nondegenerate $*$-homomorphism; and
    \item $T(xa) = T(x) \sigma(a)$ and $T(a\cdot x) = \sigma(a) T(x)$ for all $x \in E$ and  all $a \in A$.
\end{enumerate}
\end{definition}
Given a $C^*$-correspondence $E$ and a c.c. representation $(\sigma,T)$ of $E$ on $H$,
one can form the Hilbert space $E \otimes_\sigma H$, which is defined as the Hausdorff completion of the algebraic tensor product with respect to the inner product
$$\langle x \otimes h, y \otimes g \rangle = \langle h, \sigma (\langle x,y\rangle) g \rangle .$$
One then defines $\tilde{T} : E \otimes_\sigma H \rightarrow H$ by
$$\tilde{T} (x \otimes h) = T(x)h .$$

As in the theory of contractions on a Hilbert space, there are certain particularly well behaved representations
which deserve to be singled out.
\begin{definition}
A c.c. representation $(T,\sigma)$ is called \emph{isometric} if for all $x, y \in E$,
\begin{equation*}
T(x)^*T(y) = \sigma(\langle x, y \rangle) .
\end{equation*}
(This is the case if and only if $\tilde{T}$ is an isometry.) It is called \emph{fully coisometric} if $\tilde{T}$ is a coisometry.
\end{definition}

Given two Hilbert $C^*$-correspondences $E$ and $F$ over $A$, the \emph{balanced} (or \emph{inner})
tensor product $E \otimes_A F$ is a Hilbert $C^*$-correpondence over $A$ defined to be the Hausdorf completion of the algebraic tensor product
with respect to the inner product
$$\langle x \otimes y, w \otimes z \rangle = \langle y , \langle x,w\rangle \cdot z \rangle \,\, , \,\,  x,w\in E, y,z\in F .$$
The left and right actions are defined as $a \cdot (x \otimes y) = (a\cdot x) \otimes y$ and $(x \otimes y)a = x \otimes (ya)$, respectively, for all $a\in A, x\in E, y\in F$.
We shall usually omit the subscript $A$, writing just $E \otimes F$.

Suppose $\cS$ is an abelian cancellative semigroup with identity $0$ and $p : X \rightarrow \cS$ is a family of $C^*$-correspondences over $A$. Write $X(s)$ for the correspondence $p^{-1}(s)$ for $s \in \cS$. We say that $X$ is a (discrete) \emph{product system} over $\cS$ if $X$ is a semigroup, $p$ is a semigroup homomorphism and, for each $s,t \in \cS \setminus \{0\}$, the map $X(s) \times X(t) \ni (x,y) \mapsto xy \in X(s+t)$ extends to an isomorphism $U_{s,t}$ of correspondences from $X(s) \otimes_A X(t)$ onto $X(s+t)$. The associativity of the multiplication means that, for every $s,t,r \in \cS$,
\begin{equation}\label{eq:assoc_prod}
U_{s+t,r} \left(U_{s,t} \otimes I_{X(r)} \right) = U_{s,t+r} \left(I_{X(s)} \otimes U_{t,r} \right).
\end{equation}
We also require that $X(0) = A$ and that the multiplications $X(0) \times X(s) \rightarrow X(s)$ and $X(s) \times X(0) \rightarrow X(s)$ are given by the left and right actions of $A$ and $X(s)$.

\begin{definition}
Let $H$ be a Hilbert space, $A$ a $C^*$-algebra and $X$ a product system of Hilbert $A$-correspondences over the semigroup $\cS$. Assume that $T : X \rightarrow B(H)$, and write $T_s$ for the restriction of $T$ to $X(s)$, $s \in \cS$, and $\sigma$ for $T_0$. $T$ (or $(\sigma, T)$) is said to be a \emph{completely contractive covariant representation} of $X$ if
\begin{enumerate}
    \item For each $s \in \cS$, $(\sigma, T_s)$ is a c.c. representation of $X(s)$; and
    \item $T(xy) = T(x)T(y)$ for all $x, y \in X$.
\end{enumerate}
$T$ is said to be an isometric (fully coisometric) representation if it is an isometric (fully coisometric) representation on every fiber $X(s)$.
\end{definition}
Since we shall not be concerned with any other kind of representation, we shall call a completely contractive covariant representation of a product system simply a \emph{representation}.

\subsection{What this paper is about}\label{subsec:about}
In many ways, representations of product systems are analogous to semigroups of contractions on Hilbert spaces. Indeed, given
a representation $(\sigma,T)$ of a correspondence $E$ (over a $C^*$-algebra $A$), the map $\tilde{T}$ associated with the representation is ``just"
a contraction between Hilbert spaces. When $A = E = \mathbb{C}$, then a $T$ itself \emph{is} a contraction (to see the connection with semigroups in this trivial example, note that every contraction $W$ on a Hilbert space gives rise
to semigroup of contractions $\{W^n\}_{n\in\mathbb{N}}$). Furthermore, many proofs of results concerning
representations
are based on the ideas of the proofs of the analogous results concerning contractions on a Hilbert space,
with the appropriate, sometimes highly non-trivial, modifications made. For example, the proof given in
\cite{MS02} that every representation has an isometric dilation uses some methods from the classical proof that every contraction on a Hilbert space has an isometric dilation.

The point of view we adopt in this paper is that one may try to
exploit the \emph{results} rather than the  \emph{methods} of the
theory of contractive semigroups on a Hilbert space when attacking
problems concerning representations of product systems. In other
words, we wish to find a systematic way to \emph{reduce} (problems
concerning) a representation of a product system to (analagous
problems concerning) a \emph{semigroup of contractions on a
Hilbert space}. This paper contains, we would like to think, a
first step in this direction. In section \ref{sec:rep}, given a
product system $X$ over a semigroup $\cS$ and representation
$(\sigma,T)$ of $X$ on a Hilbert space $H$, we construct a Hilbert
space $\cH$ and a contractive semigroup $\hat{T} =
\{\hat{T}_s\}_{s\in\cS}$ on $\cH$, such that $\hat{T}$ contains
all the information regarding the representation (except $\sigma =
T_0$, which takes part in the construction of $\cH$). In section
\ref{sec:dil} we show that if $\hat{T}$ has a regular isometric
dilation, then so does $T$.

In section \ref{sec:dbly}, we prove that doubly commuting
representations of product systems of Hilbert correspondences over
certain subsemigroups of $\Rp^k$ have doubly commuting, regular
isometric dilations. This was already proved in \cite{S06} for the
case $\cS = \mathbb{N}^k$. Our proof is based on the construction
made in section \ref{sec:rep}.

This is a good point to remark that our approach has some
limitations. For example, the construction introduced in section
\ref{sec:rep} does not seem to be canonical in any nice way. Also,
we cannot, using the method introduced here, obtain all of the
results in \cite{S06}. We will illustrate these limitations in
section \ref{sec:further}, after proving another sufficient
condition for the existence of a regular, isometric dilation. One
might wonder, indeed, how far can one get by trying to reduce
representations of product systems to semigroups of operators on a
Hilbert space, as the former are certainly ``much more
complicated". In this context, let us just mention that in another
paper (\cite{Shalit07a}), we will show how we can obtain by these
methods another result that has not yet been proved by other
means, namely the existence of an isometric dilation to a
\emph{fully-coisometric} representation of product systems over a
subsemigroup of $\Rp^k$.

\subsection{Notation}
A \emph{commensurable semigroup} is a semigroup $\Sigma$ such that for every $N$
elements $s_1, \ldots, s_N \in \Sigma$, there exist $s_0 \in \Sigma$ and $a_1, \ldots, a_N \in \mathbb{N}$ such that
$s_i = a_i s_0$ for all $i = 1, \ldots N$. For example, $\mathbb{N}$ is a commensurable semigroup. If $r\in \Rp$, then $r\cdot \mathbb{Q}_+$ is commensurable, and any commensurable subsemigroup of $\Rp$ is contained in such a semigroup.

Throughout this paper, $\Omega$ will denote some fixed set, and $\cS$ will denote the semigroup
$$\cS = \sum_{i\in \Omega}\cS_i ,$$
where $\cS_i$ is a commensurable and unital (i.e., contains $0$) subsemigroup of $\Rp$.
To be more precise,
$\cS$ is the subsemigroup of $\Rp^\Omega$ of finitely supported functions $s$ such that $s(j) \in \cS_j$ for all $j \in \Omega$. Still another way to describe $\cS$ is the following:
\bes
\cS = \left\{ \sum_{j\in \Omega} {\bf e_j}(s_j) : s_j \in \cS_j, {\rm \,\,all \,\, but\,\, finitely\,\, many\,} s_j {\rm's \,\,are\, }0 \right\},
\ees
where ${\bf e_i}$ is the inclusion of $\cS_i$ into $\prod_{j\in \Omega}\cS_j$. Here is a good example to keep in mind: if $|\Omega| = k \in \mathbb{N}$, and if $\cS_i = \mathbb{N}$ for all $i\in\Omega$, then $\cS = \mathbb{N}^k$.
We denote by $\cS - \cS$ the subgroup of $\mathbb{R}^\Omega$ generated by $\cS$ (with addition and subtraction defined in the obvious way). For $s \in \cS - \cS$ we shall denote by $s_+$ the element in $\cS$ that sends $j\in \Omega$ to $\max\{0,s(j)\}$, and $s_- = s_+ - s$. It is worth noting that $s\in \cS-\cS$, then $s_+$ and $s_-$ are both in $\cS$.

$\cS$ becomes a partially ordered set if one introduces the relation
$$s \leq t \Longleftrightarrow s(j) \leq t(j) \,\, , \,\, j\in\Omega .$$
The symbols $<$, $\ngeq$, etc., are to be interpreted in the obvious way.

If $u = \{u_1, \ldots, u_N\} \subseteq \Omega$, we let $|u|$ denote the number of elements in $u$ (this notation will only be used for finite sets). We shall denote by ${\bf e}[u]$ the element of $\mathbb{R}^\Omega$ having $1$ in the $i$th place for every $i\in u$, and having $0$'s elsewhere, and we denote $s[u]: = {\bf e}[u]\cdot s$, where multiplication is pointwise.

The reader might note that the constructions made in the next
section make sense for (slightly) more general semigroups, but we
shall exploit this construction in sections \ref{sec:dbly} and
\ref{sec:further} only for the semigroup $\cS$.

\section{Representing representations as contractive semigroups on a Hilbert space}\label{sec:rep}
In this section we describe the main issue of this paper -- the representation of a product system representation
as a semigroup of contractions on a Hilbert space.

For the time being, we can replace $\cS$ by any abelian cancellative semigroup with identity $0$ and an appropriate partial ordering (for example, $\cS$ can be taken to be $\mathbb{R}_+^k$). We shall intentionally avoid making our statements in the most general form in order to avoid technicalities.

Let $A$ be a $C^*$-algebra, and let $X$ be a discrete product
system of $C^*$-correspondences over $\cS$. Let $(\sigma,T)$ be a
completely contractive covariant representation of $X$ on the
Hilbert space $H$. Our assumptions do not imply that $X(0) \otimes
H \cong H$. This unfortunate fact will not cause any real trouble,
but it will make our exposition a little clumsy.

Define $\cH_0$ to be the space of all finitely supported functions
$f$ on $\cS$ such that for all $0 \neq s \in \cS$, $f(s) \in X(s)
\otimes_{\sigma} H$ and such that $f(0)\in H$. We equip $\cH_0$
with the inner product
$$\langle \delta_s \cdot \xi, \delta_t \cdot \eta \rangle = \delta_{s,t} \langle \xi, \eta \rangle  ,$$
for all $s,t \in \cS - \{0\}, \xi \in X(s) \otimes H, \eta \in
X(t) \otimes H$ (where the $\delta$'s on the left hand side are
Dirac deltas, the $\delta$ on the right hand side is Kronecker's
delta). If one of $s$ or $t$ is $0$, then the inner product is
defined similarly. Let $\cH$ be the completion of $\cH_0$ with
respect to this inner product. Note that
$$\cH \cong H \oplus \Big(\oplus_{0 \neq s \in \cS} X(s)\otimes H \Big),$$
but defining it as we did has a small notational advantage. We
define a family $\hat{T} = \{\hat{T}_s\}_{s \in \cS}$ of operators
on $\cH_0$ as follows. First, we define
$\hat{T}_0$ to be the identity. Now assume that $s>0$. If $t\in \cS$ and $t \ngeq s$, then we define $\hat{T}_s (\delta_t \cdot \xi ) = 0$ for all
$\xi \in X(t) \otimes_{\sigma} H$ (or all $\xi \in H$, if $t=0$). If $\xi \in X(s) \otimes_\sigma H$, we define
$\hat{T}_s (\delta_s \cdot \xi ) = \delta_0 \cdot \tilde{T}_s \xi$. Finally, if $t
> s > 0$, we define
\be\label{eq:def:hat} \hat{T}_s \left(\delta_t \cdot (x_{t-s}
\otimes x_s \otimes h) \right) = \delta_{t-s} \cdot
\left(x_{t-s}\otimes \tilde{T}_s (x_s \otimes h) \right)
\ee
if $t
\geq s > 0$. Since $\tilde{T}_s$ is a contraction, $\hat{T}_s$
extends uniquely to a contraction in $B(\cH)$.

Let's stop to explain what we mean by equation (\ref{eq:def:hat}).
There are isomorphisms of correspondences $U_{t-s,s} : X(t-s)
\otimes X(s) \rightarrow X(t)$
Denote their inverses by
$U_{t-s,s}^{-1}$. When we write $x_{t-s} \otimes x_s$ for an
element of $X(t)$, we actually mean the image of this element by
$U_{t-s,s}$, and equation (\ref{eq:def:hat}) should be read as
\bes \hat{T}_s \left(\delta_t \cdot \left(U_{t-s,s}(x_{t-s}
\otimes x_s) \otimes h \right) \right) = \delta_{t-s} \cdot
\left(x_{t-s}\otimes \tilde{T}_s (x_s \otimes h) \right) ,\ees
or
\bes \hat{T}_s \left(\delta_t \cdot \left(\xi \otimes h \right) \right) = \delta_{t-s} \cdot
\left((I \otimes \tilde{T}_s) (U_{t-s,s}^{-1}\xi \otimes h) \right) .\ees
This shows that $\hat{T}$ is well defined.

We now show that $\hat{T}$ is a semigroup. Let $s,t,u \in \cS$. If
either $s = 0$ or $t = 0$ then it is clear that the semigroup
propety $\hat{T}_s \hat{T}_t = \hat{T}_{s+t}$ holds. Assume that
$s,t >0$. If $u \ngeq s+t$, then both $\hat{T}_{s} \hat{T}_{t}$
and $\hat{T}_{s + t}$ annihilate $\delta_u \cdot \xi$, for all
$\xi \in X(u) \otimes H$. Otherwise\footnote{Strictly speaking,
this only takes care of the case $u > s+t$ but the case $u = s+t$
is handled in a similar manner. This annoying issue will come up
again and again throughout the paper. Assuming that $\sigma$ is
unital, $X(0) \otimes H \cong H$, and one does not have to
separate the reasoning for the $X(s) \otimes H$ blocks and the $H$
blocks.},
\begin{align*}
\hat{T}_{s} \hat{T}_{t} \left(\delta_u (x_{u-s-t}\otimes x_s
\otimes x_t \otimes h ) \right) & = \hat{T}_{s} \left(\delta_{u-t}
(x_{u-s-t} \otimes x_s \otimes \tilde{T}_t(x_t \otimes h)) \right)
\\
& = \delta_{u-s-t} \left(x_{u-s-t} \otimes \tilde{T}_s(x_s \otimes \tilde{T}_t( x_t \otimes h)) \right) \\
& = \delta_{u-s-t} \left(x_{u-s-t} \otimes \tilde{T}_s(I
\otimes \tilde{T}_t )(x_s \otimes x_t \otimes h) \right) \\
& = \delta_{u-s-t} \left(x_{u-s-t} \otimes \tilde{T}_{s+t} (x_s
\otimes
x_t \otimes h) \right) \\
& = \hat{T}_{s+t} \left(\delta_{u} \left(x_{u-s-t} \otimes (x_s
\otimes x_t) \otimes h \right)\right) .
\end{align*}

We summarize the construction in the following proposition.
\begin{proposition}\label{prop:technology}
Let $A$, $X$, and $\cS$ and $(\sigma,T)$ be as above, and let
$$\cH = H \oplus \Big(\oplus_{0 \neq s \in \cS} X(s)\otimes_\sigma H \Big).$$
There exists a contractive semigroup $\hat{T} = \{\hat{T}_s\}_{s\in\cS}$ on $\cH$ such for all
$0\neq s\in\cS$, $x \in X(s)$ and $h\in H$,
$$\hat{T}_s \left(\delta_s \cdot x \otimes h \right) = T_s(x)h .$$
If $(\sigma,S)$ is another representation of $X$, and if $\hat{S}$ is the corresponding
contractive semigroup, then
$$\hat{T} = \hat{S}  \Rightarrow T = S .$$
\end{proposition}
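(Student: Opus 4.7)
The existence part of the proposition is essentially already established by the construction preceding the statement, so my plan is mainly to assemble what has been done into a proof and then verify the injectivity clause. For existence, I would define $\hat{T}_s$ on $\cH_0$ by the formulas already given (identity for $s=0$; zero on $\delta_t \cdot \xi$ when $t \ngeq s$; and the ``shift" formula $\hat{T}_s(\delta_t \cdot(x_{t-s}\otimes x_s\otimes h)) = \delta_{t-s}\cdot(x_{t-s}\otimes \tilde{T}_s(x_s\otimes h))$ for $t \geq s > 0$). The well-definedness on each fiber uses the isomorphism $U_{t-s,s}$, and boundedness by $1$ follows from the fact that $\tilde{T}_s$ is a contraction, applied fiber-by-fiber. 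The semigroup identity $\hat{T}_s\hat{T}_t=\hat{T}_{s+t}$ is precisely the computation already displayed, which rests on the product system associativity \eqref{eq:assoc_prod} together with the identity $\tilde{T}_s(I_{X(s)}\otimes \tilde{T}_t)=\tilde{T}_{s+t}$ coming from the multiplicativity of $T$. The identity $\hat{T}_s(\delta_s\cdot x\otimes h)=T_s(x)h$ is a special case of the defining formula with $t=s$: it reads $\hat{T}_s(\delta_s\cdot(x\otimes h))=\delta_0\cdot \tilde{T}_s(x\otimes h)=\delta_0\cdot T_s(x)h$, which we identify with $T_s(x)h\in H\subseteq \cH$.

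For the injectivity statement, suppose $(\sigma,S)$ is another representation of $X$ with the same $\sigma$ and $\hat{T}=\hat{S}$. For $s=0$ we have $T_0=\sigma=S_0$ by hypothesis. For $s>0$ and any $x\in X(s)$, $h\in H$, the defining formula gives
\begin{equation*}
T_s(x)h \;=\; \hat{T}_s(\delta_s\cdot x\otimes h) \;=\; \hat{S}_s(\delta_s\cdot x\otimes h) \;=\; S_s(x)h,
\end{equation*}
so $T_s(x)=S_s(x)$ on all of $H$. Since this holds for every $x\in X(s)$ and every $s\in \cS$, we conclude $T=S$.

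There is no real obstacle here; the only mild nuisance, which the author has already flagged in a footnote, is the need to treat the $s=0$ ``block" separately throughout (because $X(0)\otimes_\sigma H$ need not be naturally isomorphic to $H$ when $\sigma$ is nonunital), and to read expressions of the form $x_{t-s}\otimes x_s$ correctly via the isomorphisms $U_{t-s,s}$. Both are bookkeeping, not substance, so the proof amounts to little more than pointing to the computations already performed and observing that the key formula inverts trivially to recover $T_s$ from $\hat{T}_s$.
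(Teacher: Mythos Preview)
Your proposal is correct and follows essentially the same approach as the paper: the proposition is a summary of the construction carried out in the paragraphs preceding it, and you have faithfully reproduced that construction (definition of $\hat{T}_s$ on blocks, well-definedness via $U_{t-s,s}$, contractivity from that of $\tilde{T}_s$, and the semigroup computation), adding the obvious verification of the injectivity clause, which the paper leaves implicit since it is immediate from the displayed formula.
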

One immediately sees a limitation in this construction: we cannot say that $\hat{T}$ is unique, or, equivalently,
that
$$\hat{T} = \hat{S}  \Leftrightarrow T = S .$$
For isometries the situation is better, if one puts several additional constraints on $\hat{T}$, but we shall not go into that.

\section{Regular isometric dilations of product systems}\label{sec:dil}
Let $H$ be a Hilbert space, and let $T = \{T_s\}_{s\in\cS}$ be a
semigroup of contractions over $\cS$. A semigroup $V =
\{V_s\}_{s\in\cS}$ on a Hilbert space $K \supseteq H$ is said to
be a \emph{regular dilation of $T$} if for all $s\in\cS - \cS$
$$P_H V_{s_-}^*V_{s_+} \big|_H = T_{s_-}^*T_{s_+} .$$
$V$ is said to be an \emph{isometric} dilation if it consists of
isometries . An isometric dilation $V$ is said to be a
\emph{minimal} isometric dilation if
$$K = \bigvee_{s\in\cS}V_s H .$$
In \cite{Shalit07b} we collected various results concerning
isometric dilations of semigroups, all of them direct consequences
of sections I.7 and I.9 in \cite{SzNF70}.

The notion of regular isometric dilations can be naturally
extended to representations of product systems.
\begin{definition}
Let $X$ be a product system over $\cS$, and let $(\sigma,T)$ be a
representation of $X$ on a Hilbert space $H$. An isometric
representation $(\rho,V)$ on a Hilbert space $K \supset H$ is said
to be a \emph{regular isometric dilation} if for all $a\in A =
X(0)$, $H$ reduces $\rho(a)$ and
$$\rho(a)\big|_H = \sigma(a)\big|_H ,$$
and for all $s\in\cS - \cS$
$$P_{X(s_-) \otimes H} \tilde{V}_{s_-}^* \tilde{V}_{s_+}\big|_{X(s_+) \otimes H} = \tilde{T}_{s_-}^* \tilde{T}_{s_+}.$$
Here, $P_{X(s_-) \otimes H}$ denotes the orthogonal projection of
${X(s_-) \otimes_{\rho} K}$ on ${X(s_-) \otimes_{\rho} H}$.
$(\rho,V)$ is said to be a \emph{minimal} dilation if
$$K = \bigvee \{V(x)h : x \in X, h \in H\} .$$
\end{definition}
In \cite{S06}, Solel studied regular isometric dilation of product
system representations over $\mathbb{N}^k$, and proved some
necessary and sufficient conditions for the existence of a regular
isometric dilation. One of our aims in this paper is to show how
the construction of Proposition \ref{prop:technology} can be used
to generalize \emph{some} of the results in \cite{S06}. The
following proposition is the main tool.

\begin{proposition}\label{prop:mainprop}
Let $A$ be a $C^*$-algebra, let $X = \{X(s)\}_{s \in \cS}$ be a
product system of $A$-correspondences over $\cS$, and let
$(T,\sigma)$ be a representation of $X$ on a Hilbert space $H$.
Let $\hat{T}$ and $\cH$ be as in Proposition
\ref{prop:technology}. Assume that $\hat{T}$ has a regular
isometric dilation. Then there exists a Hilbert space $K \supseteq
H$ and an isometric representation $V$ of $X$ on $K$, such that
\begin{enumerate}
    \item\label{it:V_0} $P_H$ commutes with $V_0 (A)$, and $V_0(a) P_H = \sigma(a) P_H$, for all $a \in
    A$;
    \item\label{it:regDil} $P_{X(s_-) \otimes H} \tilde{V}_{s_-}^* \tilde{V}_{s_+}\big|_{X(s_+) \otimes H} = \tilde{T}_{s_-}^* \tilde{T}_{s_+}$ for all $s \in \cS -
    \cS$;
    \item\label{it:min} $K = \bigvee \{V(x)h : x \in X, h\in H\} $
    ;
    \item\label{it:V*} $P_H V_s(x)\big|_{K \ominus H} = 0$ for all $s \in \cS$, $x \in X(s)$.
\end{enumerate}
That is, if $\hat{T}$ has a regular isometric dilation, then so
does $T$. If $\sigma$ is nondegenerate and $X$ is essential (that is, $A X(s)$ is dense in $X(s)$ for all $s\in \cS$)
then $V_0$ is also nondegenerate.
\end{proposition}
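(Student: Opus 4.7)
The plan is to realise the desired dilation $(V, K)$ inside a dilation space for $\hat{T}$. Let $(\hat{V}, \hat{\cK})$ be a regular isometric dilation of $\hat{T}$. The semigroup law $\hat{V}_0 \hat{V}_0 = \hat{V}_0$ combined with isometricity forces $\hat{V}_0 = I_{\hat{\cK}}$, so specialising the regularity identity to $s_- = 0$ yields $P_\cH \hat{V}_s |_\cH = \hat{T}_s$. Identify $H$ with $\delta_0 \cdot H \subseteq \cH \subseteq \hat{\cK}$ and set
\[
K := H \vee \bigvee\{\hat{V}_s(\delta_s \cdot (x \otimes h)) : s > 0,\ x \in X(s),\ h \in H\}.
\]
On these generating vectors, define $\rho = V_0 : A \to B(K)$ by $\rho(a) h = \sigma(a) h$ and $\rho(a) \hat{V}_t(\delta_t \cdot (y \otimes h)) = \hat{V}_t(\delta_t \cdot ((a \cdot y) \otimes h))$, and, for $s > 0$, $V_s : X(s) \to B(K)$ by $V_s(x) h = \hat{V}_s(\delta_s \cdot (x \otimes h))$ and $V_s(x) \hat{V}_t(\delta_t \cdot (y \otimes h)) = \hat{V}_{s+t}(\delta_{s+t} \cdot ((xy) \otimes h))$. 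These formulas manifestly preserve $K$, and property (\ref{it:min}) is built in by construction.

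The only really technical step, and the main obstacle, is to check that these formulas define bounded operators on $K$. The tool is the identity $\hat{V}_u^* \hat{V}_v = \hat{V}_{v-u}$ for $v \geq u$ (from isometricity of $\hat{V}_u$ and the semigroup law), combined with the regularity identity $P_\cH \hat{V}_{r_-}^* \hat{V}_{r_+}|_\cH = \hat{T}_{r_-}^* \hat{T}_{r_+}$, applied with $r = s - t$, to handle pairs with incomparable exponents. Together these reduce any inner product $\langle \hat{V}_s(\delta_s \cdot (x \otimes h)), \hat{V}_t(\delta_t \cdot (y \otimes h')) \rangle$ to an explicit expression in the action of $\sigma$ on $H$ and the inner products on the fibres of $X$. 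Matching these expressions on both sides of the proposed formulas for $\rho$ and $V_s$ yields well-definedness, boundedness, isometricity $V_s(x)^* V_s(y) = \rho(\langle x, y \rangle)$, multiplicativity and $*$-compatibility of $\rho$, and the covariance relations $V_s(a \cdot x) = \rho(a) V_s(x)$ and $V_s(xa) = V_s(x) \rho(a)$.

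Once the operators are in place the remaining items are straightforward. Property (\ref{it:V_0}) is built into the definition of $\rho$, while $V_s(x) V_t(y) = V_{s+t}(xy)$ is immediate from the formulas and associativity of the product system. Property (\ref{it:regDil}) is the main payoff: for $x \otimes h \in X(s_+) \otimes H$ and $y \otimes h' \in X(s_-) \otimes H$,
\[
\langle \tilde{V}_{s_-}^* \tilde{V}_{s_+}(x \otimes h), y \otimes h' \rangle = \langle \hat{V}_{s_+}(\delta_{s_+} \cdot (x \otimes h)), \hat{V}_{s_-}(\delta_{s_-} \cdot (y \otimes h')) \rangle,
\]
and the regularity of $\hat{V}$ together with $\hat{T}_s(\delta_s \cdot (x \otimes h)) = T_s(x) h$ collapses the right-hand side to $\langle \tilde{T}_{s_-}^* \tilde{T}_{s_+}(x \otimes h), y \otimes h' \rangle$. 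Property (\ref{it:V*}) follows from $P_H \hat{V}_s|_\cH = P_H \hat{T}_s$ and the semigroup property of $T$, after writing a general element of $K \ominus H$ as a limit of vectors $\hat{V}_t(\delta_t \cdot (y \otimes h)) - T_t(y) h$. Finally, under the stated nondegeneracy hypotheses, $\rho(A) H = \sigma(A) H$ is dense in $H$ and, for each $s > 0$, essentiality of $X$ ensures that the span of $\hat{V}_s(\delta_s \cdot ((A \cdot x) \otimes h))$ is dense in $\hat{V}_s(\delta_s \cdot (x \otimes h))$, so that $\rho$ acts nondegenerately on all of $K$.
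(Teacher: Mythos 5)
Your proposal is correct and follows essentially the same route as the paper: pass to a (minimal) regular isometric dilation $\hat{V}$ of $\hat{T}$, let $K$ be the span of the vectors $\hat{V}_s(\delta_s\cdot(x\otimes h))$, define $V_0$ and $V_s$ by exactly the formulas you wrote, and verify boundedness, isometricity, covariance and regularity by reducing inner products via $\hat{V}_{(s-t)_-}^*\hat{V}_{(s-t)_+}\mapsto \hat{T}_{(s-t)_-}^*\hat{T}_{(s-t)_+}$. The only place you are lighter than the paper is the boundedness of $V_0(a)$, where the paper first verifies the estimate for unitary $a$ and then decomposes a general $a$ as a combination of four unitaries; your "matching expressions" step should be fleshed out along those lines, but the plan is sound.
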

\begin{remark}
\emph{The results also hold in the $W^*$ setting, that is, if $A$ is a $W^*$-algebra,
$X$ is a product system of $W^*$-correspondences and $\sigma$ is normal, then $V_0$ is also normal.
A proof of this fact will appear in \cite{Shalit07a}.}
\end{remark}
\begin{proof}
Construct $\cH$ and $\hat{T}$ as in the previous section.

Let $\hat{V} = \{\hat{V}_s \}_{s \in \cS}$ be a minimal, regular,
isometric dilation of $\hat{T}$ on some Hilbert space $\cK$.
Minimality means that \bes \cK = \bigvee \{\hat{V}_t(\delta_s
\cdot(x \otimes h)) : s,t \in \cS, x \in X(s), h \in H \} . \ees

Introduce the Hilbert space $K$, \bes K = \bigvee
\{\hat{V}_s(\delta_s \cdot(x \otimes h)) : s \in \cS, x \in X(s),
h \in H \} . \ees

We consider $H$ as embedded in $K$ (or in $\cH$ or in $\cK$) by
the identification \bes h \leftrightarrow \delta_0 \cdot h . \ees

Next, we define a left action of $A$ on $\cH$ by \bes a \cdot
(\delta_s \cdot x \otimes h) = \delta_s \cdot ax \otimes h , \ees
for all $a \in A, s \in \cS - \{0\}, x \in X(s)$ and $h \in H$, and
\be\label{eq:V_0onH} a \cdot (\delta_0 \cdot h) = \delta_0 \cdot
\sigma(a) h \,\, , \,\, a \in A, h \in H . \ee

By Lemma 4.2 in \cite{cL94}, this extends to a bounded linear
operator on $\cH$. Indeed, this follows from the following
inequality:
\begin{align*}
\|\sum_{i=1}^n a x_i \otimes h_i \| & = \sum_{i,j=1}^n \langle
h_i,
T_0 (\langle a x_i, a x_j \rangle) h_j \rangle \\
& = \left \langle \big(T_0 (\langle a x_i, a x_j \rangle ) \big) (h_1, \ldots, h_n)^T,  (h_1, \ldots, h_n)^T \right \rangle_{H^{(n)}} \\
(*)& \leq \|a\|^2 \left \langle \big(T_0 (\langle x_i, x_j \rangle ) \big) (h_1, \ldots, h_n)^T,  (h_1, \ldots, h_n)^T \right \rangle_{H^{(n)}} \\
& = \|a\|^2 \|\sum_{i=1}^n x_i \otimes h_i \| .
\end{align*}
The inequality (*) follows from the complete positivity of $T_0$
and from $(\langle a x_i, a x_j \rangle ) \leq \|a\|^2 (\langle
x_i, x_j \rangle )$, which is the content of the cited lemma.

In fact, this is a $*$-representation (and it faithful if $T_0$
is). Explanation: it is clear that this is a homomorphism of
algebras. To see that it is a $*$-representation it is enough to
take $s \in \cS, x,y \in X(s)$ and $h,k \in H$ and to compute
\begin{align*}
\langle ax \otimes h, y \otimes k \rangle = \langle  h, T_0
(\langle ax,  y \rangle)  k \rangle = \langle  h, T_0 (\langle x,
a^* y \rangle)  k \rangle = \langle x \otimes h, a^*y \otimes k
\rangle ,
\end{align*}
(recall that the left action of $A$ on X(s) is adjointable). Note that
this left action commutes with $\hat{T}$: \bes a \hat{T}_s
(\delta_t x_{t-s} \otimes x_s \otimes h) = \delta_{t-s} a x_{t-s}
\otimes T_s (x_s) h = \hat{T}_s (\delta_t a x_{t-s} \otimes x_s
\otimes h) , \ees or \bes a \hat{T}_s (\delta_s x_s \otimes h) =
\delta_{0} \sigma (a)  T_s (x_s) h = \delta_{0}   T_s (a x_s) h=
\hat{T}_s (\delta_s a x_s \otimes h) . \ees

We shall now define a representation $V$ of $X$ on $K$. We wish to
define $V_0$ by the rules \be \label{eq:V_0 definition} V_0(a)
\hat{V}_s (\delta_s \cdot x_s \otimes h) = \hat{V}_s (\delta_s
\cdot a x_s \otimes h) , \ee and \bes V_0(a)  (\delta_0 \cdot h) =
\delta_0 \cdot \sigma(a) h . \ees
 To see that this extends
to a bounded, linear operator on $K$, let $\sum_{t} \hat{V}_t
(\delta_t \cdot x_t \otimes h_t) \in K$ (a finite sum), and
compute
\begin{align*}
\| \sum_{t} \hat{V}_t (\delta_t \cdot a x_t \otimes h_t) \|^2 &=
\sum_{s,t} \langle \hat{V}_s (\delta_s \cdot a x_s \otimes
h_s) , \hat{V}_t (\delta_t \cdot a x_t \otimes h_t) \rangle \\
&= \sum_{s,t} \langle \hat{V}_{(s-t)_-}^* \hat{V}_{(s-t)_+}
(\delta_s \cdot a x_s \otimes h_s) , \delta_t \cdot
a x_t \otimes  h_t \rangle \\
(*)&= \sum_{s,t} \langle \hat{T}_{(s-t)_-}^* \hat{T}_{(s-t)_+}
(\delta_s \cdot a x_s \otimes h_s) , \delta_t \cdot
a x_t \otimes  h_t \rangle \\
&= \sum_{s,t} \langle \hat{T}_{(s-t)_-}^* \hat{T}_{(s-t)_+}
(\delta_s \cdot a^* a x_s \otimes h_s) , \delta_t \cdot
 x_t \otimes  h_t \rangle \\
&= \sum_{s,t} \langle \hat{V}_s (\delta_s \cdot a^* a x_s \otimes
h_s) , \hat{V}_t (\delta_t \cdot  x_t \otimes h_t) \rangle .
\end{align*}
(The computation would have worked for finite sums including
summands from $H$, also). Step (*) is justified because $\hat{V}$
is a regular dilation of $\hat{T}$. This will be used repeatedly.
We conclude that if $a \in A$ is unitary then \bes \left \|
\sum_{t} \hat{V}_t (\delta_t \cdot a x_t \otimes h_t) \right \| =
\left \| \sum_{t} \hat{V}_t (\delta_t \cdot  x_t \otimes h_t)
\right \| . \ees For general $a \in A$, we may write $a =
\sum_{i=1}^4 \lambda_i u_i$, where $u_i$ is unitary and
$|\lambda_i| \leq 2 \|a \|$. Thus, \bes \left \| \sum_{t}
\hat{V}_t (\delta_t \cdot a x_t \otimes h_t) \right \| = \left
\|\sum_{i=1}^4  \lambda_i \sum_{t} \hat{V}_t (\delta_t u_i \cdot
x_t \otimes h_t) \right \| \leq 8 \|a\| \left \| \sum_{t}
\hat{V}_t (\delta_t \cdot  x_t \otimes h_t) \right \| . \ees In
fact, we will soon see that $V_0$ is a representation, so this
quite a lousy estimate. But we make it only to show that $V_0(a)$
can be extended to a well defined operator on $K$.

It is immediate that $V_0$ is linear and multiplicative. To see
that it is $*$-preserving, let $s,t \in \cS$, $x \in X(s), x' \in
X(t)$ and $h,h' \in H$.
\begin{align*}
\langle V_0(a)^* \hat{V}_s (\delta_s \cdot x \otimes h),
\hat{V}_t(\delta_t \cdot x' \otimes h') \rangle
& = \langle  \hat{V}_s (\delta_s \cdot x \otimes h), V_0(a) \hat{V}_t(\delta_t \cdot x' \otimes h') \rangle \\
& = \langle  \hat{V}_s (\delta_s \cdot x \otimes h),  \hat{V}_t(\delta_t \cdot ax' \otimes h') \rangle \\
& = \langle \hat{V}_{(s-t)_-}^* \hat{V}_{(s-t)_+} (\delta_s \cdot x \otimes h),  \delta_t \cdot ax' \otimes h' \rangle \\
& = \langle \hat{T}_{(s-t)_-}^* \hat{T}_{(s-t)_+} (\delta_s \cdot x \otimes h),  \delta_t \cdot ax' \otimes h' \rangle \\
& = \langle \hat{T}_{(s-t)_-}^* \hat{T}_{(s-t)_+} (\delta_s \cdot a^* x \otimes h), \delta_t \cdot x' \otimes h'\rangle\\
& = \langle \hat{V}_s (\delta_s \cdot a^* x \otimes h),  \hat{V}_t(\delta_t \cdot x' \otimes h') \rangle \\
& = \langle V_0(a^*) \hat{V}_s (\delta_s \cdot x \otimes h),
\hat{V}_t(\delta_t \cdot x' \otimes h') \rangle.
\end{align*}
Thus, $V_0(a)^* = V_0(a^*)$.

By (\ref{eq:V_0onH}), $H$ reduces $V_0 (A)$, and $V_0(a) \big|_H =
\sigma(a) \big|_H$ (under the appropriate identifications). The assertion about nondegeneracy of $V_0$ is clear from the definitions.

To define $V_s$ for $s > 0$, we will show that the rule \be
\label{eq:definition V_s} V_s(x_s) \hat{V}_t(\delta_t \cdot x_t
\otimes h) = \hat{V}_{s+t} (\delta_{s+t} \cdot x_s \otimes x_t
\otimes h) \ee can be extended to a well defined operator on $K$.
Let $\sum \hat{V}_{t_i}(\delta_{t_i} \cdot x_i \otimes h_i) $ be a
finite sum in $K$, and let $s \in \cS, x_s \in X(s)$. To estimate
\begin{align*}
\| \sum \hat{V}_{t_i + s}(\delta_{t_i + s} \cdot & x_s \otimes x_i
\otimes h_i) \|^2 = \\
&= \sum \langle \hat{V}_{t_i + s}(\delta_{t_i + s} \cdot x_s \otimes x_i \otimes h_i), \hat{V}_{t_j + s}(\delta_{t_j + s} \cdot x_s \otimes x_j \otimes h_j) \rangle \\
&= \sum \langle \hat{V}_s  \hat{V}_{t_i}(\delta_{t_i + s} \cdot x_s \otimes x_i \otimes h_i), \hat{V}_s \hat{V}_{t_j}(\delta_{t_j + s} \cdot x_s \otimes x_j \otimes h_j) \rangle \\
&= \sum \langle  \hat{V}_{t_i}(\delta_{t_i + s} \cdot x_s \otimes
x_i \otimes h_i), \hat{V}_{t_j}(\delta_{t_j + s} \cdot x_s \otimes
x_j \otimes h_j) \rangle,
\end{align*}
we look at each summand of the last equation. Denoting $\xi_i =
x_i \otimes h_i$, we have
\begin{align*}
\big\langle \hat{V}_{t_i}(\delta_{t_i + s} \cdot x_s \otimes
\xi_i), & \hat{V}_{t_j}(\delta_{t_j + s} \cdot x_s \otimes \xi_j)
\big\rangle =\\
&= \big\langle \hat{V}_{(t_i - t_j)_-}^* \hat{V}_{(t_i - t_j)_+}(\delta_{t_i + s} \cdot x_s \otimes \xi_i), \delta_{t_j + s} \cdot x_s \otimes \xi_j \big\rangle \\
&= \big\langle \hat{T}_{(t_i - t_j)_-}^* \hat{T}_{(t_i - t_j)_+}(\delta_{t_i + s} \cdot x_s \otimes \xi_i), \delta_{t_j + s} \cdot x_s \otimes \xi_j \big\rangle \\
&= \big\langle \delta_{t_j + s} \cdot x_s \otimes \left(I \otimes \tilde{T}_{(t_i-t_j)_-}^*\right) \left(I \otimes \tilde{T}_{(t_i-t_j)_+}\right) \xi_i, \\
& \quad \quad \quad \delta_{t_j + s} \cdot x_s \otimes \xi_j \big\rangle \\
&= \big\langle \delta_{t_j} \cdot \left(I \otimes \tilde{T}_{(t_i-t_j)_-}^*\right) \left(I \otimes \tilde{T}_{(t_i-t_j)_+}\right) \xi_i, \delta_{t_j} \cdot |x_s|^2  \xi_j \big\rangle \\
&= \big\langle \hat{T}_{(t_i - t_j)_-}^* \hat{T}_{(t_i - t_j)_+}(\delta_{t_i} \cdot \xi_i), \delta_{t_j} \cdot |x_s|^2 \xi_j \big\rangle \\
&= \big\langle \hat{V}_{t_i}(\delta_{t_i } \cdot |x_s| \xi_i), \hat{V}_{t_j}(\delta_{t_j} \cdot |x_s| \xi_j) \big\rangle \\
&= \big\langle V_0(|x_s|) \hat{V}_{t_i}(\delta_{t_i } \cdot
\xi_i), V_0(|x_s|) \hat{V}_{t_j}(\delta_{t_j} \cdot \xi_j)
\big\rangle ,
\end{align*}
(again, this argument works also if some $\xi$'s are in $H$). This
means that
\begin{align*}
\| \sum \hat{V}_{t_i + s}(\delta_{t_i + s} \cdot x_s \otimes x_i
\otimes h_i) \|^2
&= \| V_0 (|x_s|) \sum \hat{V}_{t_i}(\delta_{t_i} \cdot x_i \otimes h_i) \|^2 \\
&\leq \| V_0 (|x_s|)\|^2 \left \| \sum \hat{V}_{t_i}(\delta_{t_i}
\cdot x_i \otimes h_i) \right \|^2,
\end{align*}
so the mapping $V_s$ defined in (\ref{eq:definition V_s}) does
extend to a well defined operator on $K$. Now it is clear from the
definitions that for all $s \in \cS$, $(V_0, V_s)$ is a covariant
representation of $X(s)$ on $K$. We now show that it is isometric.
Let $s,t,u \in \cS$, $x, y \in X(s)$, $x_t \in X(t)$, $x_u \in
X(u)$ and $h,g \in H$. Then
\begin{align*}
\langle V_s(x)^* V_s(y) \hat{V}_t \delta_t \cdot x_t \otimes h,&
\hat{V}_u \delta_u \cdot x_u \otimes g \rangle =\\
&= \langle  \hat{V}_{t+s} \delta_{t+s} \cdot y \otimes x_t \otimes h, \hat{V}_{u+s} \delta_{u+s} \cdot x \otimes x_u \otimes g \rangle \\
&= \langle  \hat{V}_{(t-u)_-}^* \hat{V}_{(t-u)_+} \delta_{t+s} \cdot y \otimes x_t \otimes h,  \delta_{u+s} \cdot x \otimes x_u \otimes g \rangle \\
(*)&= \langle  \hat{V}_{(t-u)_-}^* \hat{V}_{(t-u)_+} \delta_t \cdot x_t \otimes h,  \delta_{u} \cdot \langle y, x \rangle x_u \otimes g \rangle \\
&= \langle  \hat{V}_{t} \delta_{t} \cdot x_t \otimes h, \hat{V}_{u} \delta_{u} \cdot \langle y, x \rangle x_u \otimes g \rangle \\
&= \langle V_0(\langle x,y \rangle) \hat{V}_t \delta_t \cdot x_t
\otimes h, \hat{V}_u \delta_u \cdot x_u \otimes g \rangle .
\end{align*}
The justification of (*) was carried essentially out in the proof
that $V_s (x_s)$ is well defined. Let us, for a change, show that
this computation works also for the case $u=0$:
\begin{align*}
\langle V_s(x)^* V_s(y) \hat{V}_t \delta_t \cdot x_t \otimes h,&
 \delta_0 \cdot g \rangle =\\
&= \langle  \hat{V}_{t+s} \delta_{t+s} \cdot y \otimes x_t \otimes h, \hat{V}_{s} \delta_{s} \cdot x \otimes g \rangle \\
&= \langle  \hat{V}_{t} \delta_{t+s} \cdot y \otimes x_t \otimes h,  \delta_{s} \cdot x \otimes g \rangle \\
&= \langle  \hat{T}_{t} \delta_{t+s} \cdot y \otimes x_t \otimes h,  \delta_{s} \cdot x \otimes g \rangle \\
&= \langle  \delta_{s} \cdot y \otimes T_t(x_t) \otimes h,  \delta_{s} \cdot x \otimes g \rangle \\
&= \langle  T_t(x_t) \otimes h, \sigma (\langle y, x\rangle) g \rangle \\
&= \langle  \hat{T}_t \delta_t \cdot x_t \otimes h, V_0 (\langle y, x\rangle) \delta_0 \cdot g \rangle \\
&= \langle  \hat{V}_t \delta_t \cdot x_t \otimes h, V_0 (\langle y, x\rangle) \delta_0 \cdot g \rangle \\
&= \langle V_0(\langle x,y \rangle) \hat{V}_t \delta_t \cdot x_t
\otimes h, \delta_0 \cdot g \rangle .
\end{align*}

We have constructed a family $V = \{V_s \}_{s \in \cS}$ of maps
such that $(V_0, V_s)$ is an isometric covariant representation of
$X(s)$ on $K$. To show that $V$ is a product system representation
of $X$, we need to show that the ``semigroup property" holds.

Let $h \in H$, $s,t,u \in \cS$, and let $x_s, x_t, x_u$ be in
$X(s), X(t), X(u)$, respectively. Then
\begin{align*}
V_{s+t} (x_s \otimes x_t) \hat{V}_u (\delta_u \cdot x_u \otimes h)
& = \hat{V}_{s+t+u}(\delta_{s+t+u} \cdot x_s \otimes x_t \otimes x_u \otimes h) \\
& = V_s (x_s) \hat{V}_{t+u}(\delta_{t+u} \cdot x_t \otimes x_u \otimes h) \\
& = V_s (x_s) V_t (x_t) \hat{V}_{u}(\delta_{u} \cdot x_u \otimes
h),
\end{align*}
so the semigroup property holds.

We have yet to show that $V$ is a minimal, regular dilation of
$T$. To see that it is a regular dilation, let $s \in \cS - \cS$,
$x_+ \in X(s_+), x_- \in X(s_-)$ and $h = \delta_0 \cdot h, g =
\delta_0 \cdot g \in H$. Using the fact that $\hat{V}$ is a
regular dilation of $\hat{T}$, we compute:
\begin{align*}
\langle \tilde{V}_{s_-}^* \tilde{V}_{s_+} (x_+ \otimes \delta_0
\cdot h),  (x_- \otimes \delta_0 \cdot g) \rangle &= \langle
\hat{V}_{s_+} (\delta_{s_+} x_+ \otimes h),
\hat{V}_{s_-} (\delta_{s_-} x_- \otimes g) \rangle \\
&= \langle \hat{V}_{s_-}^* \hat{V}_{s_+} (\delta_{s_+} x_+ \otimes
h),
\delta_{s_-} x_- \otimes g \rangle \\
&= \langle \hat{T}_{s_-}^* \hat{T}_{s_+} (\delta_{s_+} x_+ \otimes
h),
 \delta_{s_-} x_- \otimes g \rangle \\
&= \langle \tilde{T}_{s_+} (x_+ \otimes h),
\tilde{T}_{s_-} (x_- \otimes g) \rangle \\
&= \langle \tilde{T}_{s_-}^* \tilde{T}_{s+} (x_+ \otimes h),
 x_- \otimes g \rangle .
\end{align*}

$V$ is a minimal dilation of $T$, because
\begin{align*}
K &= \bigvee \{\hat{V}_s(\delta_s \cdot(x \otimes h)) : s \in \cS,
x \in X(s),
h \in H \} \\
&= \bigvee \{V_s(x) (\delta_0 \cdot h) : s \in \cS, x \in X(s), h
\in H \} .
\end{align*}

Finally, let us note that item \ref{it:V*} from the statement of the proposition is true for any
minimal isometric dilation (of any c.c. representation of a
product system over any semigroup). Indeed, let $V$ be a minimal
isometric dilation of $T$ on $K$. Let $x_s \in X(s), x_t \in X(t)$
and $h \in H$. Then
\begin{align*}
P_H V_s(x_s) V_t(x_t) h & = P_H V_{s+t} (x_s \otimes x_t)h \\
& = T_{s+t} (x_s \otimes x_t)h = T_s(x_s) T_t(x_t) h \\
& = P_H V_s(x_s) P_H V_t(x_t) h.
\end{align*}
But $K = \bigvee \{V_s(x)h : s \in \cS, x \in X(s), h \in H \}$,
so $P_H V_s(x_s) P_H = P_H V_s(x_s)$, from which item
(\ref{it:V*}) follows.
\end{proof}

It is worth noting that, as commensurable semigroups are
countable, if $\cS = \sum_{i=1}^\infty \cS_i$, then, using the
notation of the above proposition, separability of $H$ implies
that $K$ is separable. It is also worth recording the following
result, the proof of which essentially appears in the proof of
Proposition 3.7, \cite{S06}.

\begin{proposition}\label{prop:unique}
Let $X$ be a product system over $\cS$, and let $T$ be a
representation of $X$. A minimal, regular, isometric dilation of
$T$ is unique up to unitary equivalence.
\end{proposition}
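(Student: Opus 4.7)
The plan is to construct the unitary intertwiner directly on the dense ``generating'' subspace and then argue that the regular dilation property, together with the isometric/covariant structure, forces the inner products to be determined by $T$ alone, so that the map extends isometrically.

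Let $(\rho,V)$ on $K$ and $(\rho',V')$ on $K'$ be two minimal regular isometric dilations of $T$. By the minimality condition, the sets
\begin{equation*}
\cD = \{V_s(x)h : s\in\cS,\, x\in X(s),\, h\in H\}, \quad \cD' = \{V'_s(x)h : s\in\cS,\, x\in X(s),\, h\in H\}
\end{equation*}
span dense subspaces of $K$ and $K'$ respectively. I would try to define a linear map $W$ on $\mathrm{span}(\cD)$ by $W\bigl(V_s(x)h\bigr) = V'_s(x)h$ and show it is isometric; minimality of both dilations then upgrades the closure of $W$ to a unitary $W : K \to K'$ fixing $H$ pointwise.

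The heart of the argument is to check that for any $u,v\in\cS$, $x\in X(u)$, $y\in X(v)$, $h,g\in H$, the inner product $\langle V_u(x)h, V_v(y)g\rangle_K$ depends only on $T$ and not on the chosen dilation. Let $r = \min(u,v)$ pointwise and set $s = u-v \in \cS - \cS$, so that $u = r + s_+$ and $v = r + s_-$. Using the product system isomorphism $X(u)\cong X(r)\otimes X(s_+)$, and approximating by simple tensors $x = x_r\otimes x_+$, $y = y_r\otimes y_-$, the semigroup property of $V$ together with the isometric identity $V_r(x_r)^*V_r(y_r) = V_0(\langle x_r, y_r\rangle)$ and the covariance relation $V_0(a)V_{s_-}(z) = V_{s_-}(a\cdot z)$ reduce the inner product to
\begin{equation*}
\langle \tilde{V}_{s_+}(x_+\otimes h),\, \tilde{V}_{s_-}(\langle x_r, y_r\rangle\cdot y_-\otimes g)\rangle_K.
\end{equation*}
The regular dilation hypothesis now converts this to $\langle \tilde{T}_{s_-}^*\tilde{T}_{s_+}(x_+\otimes h),\, \langle x_r,y_r\rangle\cdot y_-\otimes g\rangle$, an expression involving only $T$. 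Since the same identity holds with $V'$ in place of $V$, the inner product matches, so $W$ is isometric on $\mathrm{span}(\cD)$ and extends to a unitary $W : K \to K'$ by minimality. That $W|_H = \mathrm{id}_H$ is immediate from the fact that $h$ corresponds to $V_0(1)h$ (or, when $A$ is non-unital, from the identification of $h$ with itself under the embeddings $H\hookrightarrow K$, $H\hookrightarrow K'$, together with $V_0(a)|_H = \sigma(a) = V'_0(a)|_H$).

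Finally, intertwining is a short computation on generators: for $s,t\in\cS$, $x\in X(s)$, $y\in X(t)$, $h\in H$,
\begin{equation*}
WV_s(x)V_t(y)h = WV_{s+t}(x\otimes y)h = V'_{s+t}(x\otimes y)h = V'_s(x)V'_t(y)h = V'_s(x)WV_t(y)h,
\end{equation*}
and density of $\mathrm{span}(\cD)$ in $K$ gives $WV_s(x) = V'_s(x)W$ for every $s$ and $x$; the analogous relation for $s=0$ follows similarly from the definition of $W$ on $H$. The main obstacle I anticipate is purely bookkeeping: handling the case where $u$ or $v$ equals $0$ (so that the $X(0)\otimes H \not\cong H$ subtlety flagged repeatedly in the paper reappears) and justifying the passage from simple tensors $x_r\otimes x_+$ to general elements of $X(u)$ by continuity of the isomorphism $U_{r,s_+}$ and of $\tilde V$, $\tilde T$.
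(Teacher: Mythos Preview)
Your argument is correct and is exactly the standard uniqueness proof for minimal dilations: match generators $V_s(x)h \leftrightarrow V'_s(x)h$, compute the inner product of two such generators by factoring through the common part $r=\min(u,v)$, use the isometric relation $V_r(x_r)^*V_r(y_r)=\rho(\langle x_r,y_r\rangle)$ and covariance to strip off the $r$-factor, and then invoke the regular-dilation identity to land on an expression depending only on $T$. The intertwining check on generators is also fine, and you correctly flag the only genuine nuisance, namely the $X(0)\otimes H$ versus $H$ bookkeeping.

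There is nothing to compare against here: the paper does not supply its own proof of this proposition. It simply records the statement and remarks that ``the proof \ldots essentially appears in the proof of Proposition~3.7, \cite{S06}.'' Your write-up is precisely the argument one expects to find behind that reference (Solel's proof for $\cS=\mathbb{N}^k$ proceeds the same way), so you have in effect reconstructed the cited proof rather than produced an alternative route.
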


\section{Regular isometric dilations of doubly commuting representations}\label{sec:dbly}
It is well known that in order that a $k$-tuple $(T_1, T_2,
\ldots, T_k)$ of contractions have a commuting isometric dilation,
it is not enough to assume that the contractions commute. One of
the simplest sufficient conditions that one can impose on $(T_1,
T_2, \ldots, T_k)$ is that it \emph{doubly commute}, that is
$$T_j T_k = T_k T_j \,\, {\rm and} \,\, T_j^* T_k= T_k T_j^* $$
for all $j \neq k$. Under this assumption, the $k$-tuple $(T_1,
T_2, \ldots, T_k)$ actually has regular unitary dilation. In fact,
if the $k$-tuple $(T_1, T_2, \ldots, T_k)$ doubly commutes then it
also has a \emph{doubly commuting} regular \emph{isometric}
dilation (see Proposition 3.5 in \cite{Shalit07b} for the simple
explanation). This fruitful notion of double commutation can be
generalized to representations as follows.
\begin{definition}
A representation $(\sigma,T)$ of a product system $X$ over $\cS$
is said to \emph{doubly commute} if \bes (I_{{\bf e_k}(s_k)}
\otimes \tilde{T}_{{\bf e_j}(s_j)})
  (t \otimes I_H) (I_{{\bf e_j}(s_j)} \otimes \tilde{T}_{{\bf e_k}(s_k)}^*) = \tilde{T}_{{\bf e_k}(s_k)}^* \tilde{T}_{{\bf e_j}(s_k)}
\ees
for all $j \neq k$ and all nonzero $s_j\in \cS_j, s_k\in \cS_k$, where
$t$ stands for the isomorphism between $X({\bf e_j}(s_j)) \otimes
X({\bf e_k}(s_k))$ and $X({\bf e_k}(s_k)) \otimes X({\bf
e_j}(s_j))$, and $I_{s}$ is shorthand for $I_{X(s)}$.
\end{definition}
\begin{theorem}\label{thm:dbly}
Let $A$ be a $C^*$-algebra, let $X = \{X(s)\}_{s \in \cS}$ be a
product system of $A$-correspondences over $\cS$, and let
$(\sigma,T)$ be doubly commuting representation of $X$ on a
Hilbert space $H$. There exists a Hilbert space $K \supseteq H$
and a minimal, doubly commuting, regular isometric representation
$V$ of $X$ on $K$.
\end{theorem}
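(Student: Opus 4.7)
The plan is to apply Proposition \ref{prop:mainprop}, which reduces the existence of a regular isometric dilation of $T$ to the same question for the contractive semigroup $\hat{T}$ on $\cH$. Thus, I first construct $\cH$ and $\hat{T}$ from $(\sigma,T)$ as in Proposition \ref{prop:technology}.

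The first key step is to check that the hypothesis of double commutation for the product system representation $T$ translates into classical double commutation for $\hat{T}$ as a semigroup of contractions on $\cH$: for any $i \neq j$ in $\Omega$ and any nonzero $s_i \in \cS_i,\ s_j \in \cS_j$,
\[
\hat{T}_{{\bf e_j}(s_j)}\, \hat{T}_{{\bf e_i}(s_i)}^{\,*} \;=\; \hat{T}_{{\bf e_i}(s_i)}^{\,*}\, \hat{T}_{{\bf e_j}(s_j)}.
\]
To verify this, I will first write down the action of $\hat{T}_a^*$ explicitly: a short inner-product computation shows that $\hat{T}_a^*(\delta_u\cdot (x_u\otimes h)) = \delta_{u+a}\cdot(x_u\otimes\tilde{T}_a^*(h))$, where the result is interpreted via the product system isomorphism $U_{u,a}$. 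I then compute both sides of the displayed identity on a typical basis vector $\delta_u\cdot(x_u\otimes h)$, splitting into cases according to how $u$ compares with $a$ and $b$. The only nontrivial case is when $u\geq b$ (so the right hand side can fire), where both sides reduce to an expression involving one application each of $\tilde{T}_b$ and $\tilde{T}_a^*$; the equality of these expressions is exactly the content of the assumed doubly commuting identity $(I_a\otimes\tilde{T}_b)(t\otimes I_H)(I_b\otimes\tilde{T}_a^*) = \tilde{T}_a^*\tilde{T}_b$ applied after tensoring with a $X(u-b)$ factor. This is the main place the hypothesis is used.

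Having established that $\hat{T}$ is a doubly commuting contractive semigroup over $\cS$, I invoke the classical Hilbert space dilation theorem for such semigroups from \cite{Shalit07b}, which (building on Brehmer's theorem and its extensions to sums of commensurable subsemigroups of $\Rp$) produces a minimal, doubly commuting, regular isometric dilation $\hat{V}$ of $\hat{T}$ on some Hilbert space $\cK\supseteq\cH$. Plugging $\hat{V}$ into Proposition \ref{prop:mainprop} then yields an isometric representation $V$ of $X$ on $K\supseteq H$ which is a minimal, regular isometric dilation of $T$.

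The remaining and most delicate task is to verify that the representation $V$ so obtained is itself doubly commuting, i.e. that
\[
(I_{{\bf e_k}(s_k)}\otimes\tilde{V}_{{\bf e_j}(s_j)})(t\otimes I_K)(I_{{\bf e_j}(s_j)}\otimes\tilde{V}_{{\bf e_k}(s_k)}^*) \;=\; \tilde{V}_{{\bf e_k}(s_k)}^*\tilde{V}_{{\bf e_j}(s_j)}.
\]
The plan is to test this identity by pairing both sides against vectors of the spanning family $\{V_s(x)h : s\in\cS,\ x\in X(s),\ h\in H\}$ of $K$ that Proposition \ref{prop:mainprop}(\ref{it:min}) provides. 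Using the formula $V_s(x)h = \hat{V}_s(\delta_s\cdot x\otimes h)$, each inner product reduces — via the same style of calculation used in the proof of Proposition \ref{prop:mainprop} (inserting $\hat{V}_{s_-}^*\hat{V}_{s_+}$ and then passing to $\hat{T}_{s_-}^*\hat{T}_{s_+}$ on the core $\cH$) — to an identity that holds because of the double commutation of $\hat{V}$ together with the double commutation of $\hat{T}$ and the coherence of the flip isomorphism $t$ with the product system multiplications $U_{a,b}$ and $U_{b,a}$. The main obstacle will be the careful book-keeping of the flip $t$ in these calculations: one must ensure that the combinatorial identity between $\hat{V}_a$, $\hat{V}_b^*$ on the Hilbert space level meshes correctly with the tensor swap on the product system level, which is precisely the point at which the definition of double commutation for a product system representation was designed to match classical double commutation of a semigroup.
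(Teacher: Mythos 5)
Your proposal is correct and follows essentially the same route as the paper: show that the double commutation hypothesis on $T$ forces $\hat{T}_{{\bf e_j}(s_j)}$ and $\hat{T}_{{\bf e_k}(s_k)}$ to doubly commute (via the same explicit formula for $\hat{T}_a^*$ and the same case analysis), invoke the Hilbert-space result of \cite{Shalit07b} to get a minimal doubly commuting regular isometric dilation $\hat{V}$, feed it into Proposition \ref{prop:mainprop}, and then verify double commutation of $V$ on the spanning family $\{V_s(x)h\}$ using the double commutation of $\hat{V}$. The only (immaterial) difference is in the last step, where you pair both sides against spanning vectors while the paper applies both sides to spanning vectors after composing with the isometry $\tilde{V}_k$; both reduce to the same computation, which the paper also leaves as a sketch.
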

\begin{proof}
Construct $\cH$ and $\hat{T}$ as in section \ref{sec:rep}.

We now show that $\hat{T}_{{\bf e_j}(s_j)}$ and $\hat{T}_{{\bf
e_k}(s_k)}$ doubly commute for all $j \neq k$, and all $s_j\in
\cS_j, s_k\in \cS_k$. Let $t \in \cS$, $x \in X(t), y \in X({\bf
e_j}(s_j))$ and $h \in H$. Using the assumption that $T$ is a
doubly commuting representation,
\begin{align*}
\hat{T}_{{\bf e_k}(s_k)}^* \hat{T}_{{\bf e_j}(s_j)} (\delta_{t+{\bf e_j}(s_j)} \cdot x \otimes y \otimes h)
&= \hat{T}_{{\bf e_k}(s_k)}^* \left(\delta_{t} \cdot x \otimes \tilde{T}_{{\bf e_j}(s_j)} (y \otimes h) \right) \\
&= \delta_{t+{\bf e_k}(s_k)} \cdot x \otimes \tilde{T}_{{\bf e_k}(s_k)}^* \tilde{T}_{{\bf e_j}(s_j)} (y \otimes h) \\
&= \delta_{t+{\bf e_k}(s_k)} \cdot x \otimes \left( (I_{{\bf e_k}(s_k)} \otimes \tilde{T}_{{\bf e_j}(s_j)})
  (t \otimes I_H) (I_{{\bf e_j}(s_j)} \otimes \tilde{T}_{{\bf e_k}(s_k)}^*)
  (y \otimes h) \right) \\
&= \hat{T}_{{\bf e_j}(s_j)} \hat{T}_{{\bf e_k}(s_j)}^* (\delta_{t+{\bf e_j}(s_j)} \cdot x \otimes y \otimes h) ,
\end{align*}
where we have written $t$ for the
isomorphism between $X({\bf e_j}(s_j)) \otimes X({\bf e_k}(s_k))$ and
$X({\bf e_k}(s_k)) \otimes X({\bf e_j}(s_j))$, and we haven't written the
isomorphisms between $X(s) \otimes X(t)$ and $X(s+t)$.

By Corollary 3.7 in \cite{Shalit07b}\footnote{We have to mention
that the proof of Corollary 3.7, \cite{Shalit07b}, is based on
Theorem 3.10 of \cite{S06}. This may seem like an awkward
situation since we are trying to promote a \emph{new} method of
analyzing representations. Of course, Theorem 3.10 of \cite{S06}
could have been proved in the setting of contraction semigroups on
Hilbert spaces, so there is no real departure from our model.},
there exists a minimal, regular isometric dilation $\hat{V} =
\{\hat{V}_s \}_{s \in \cS}$ of $\hat{T}$ on some Hilbert space
$\cK$, such that $\hat{V}_{{\bf e_j}(s_j)}$ and $\hat{V}_{{\bf
e_k}(s_k)}$ doubly commute for all $j\neq k,s_j\in \cS_j, s_k \in
\cS_k$. The construction in Proposition \ref{prop:mainprop} gives
rise to a minimal, regular isometric dilation $V$ of $T$ on some
Hilbert space $K$.

To see that $V$ is doubly commuting, one computes what one should
using the fact that $\hat{V}$ is a minimal, doubly commuting,
regular isometric dilation of $\hat{T}$ (all the five adjectives
attached to $\hat{V}$ play a part). This takes about 4 pages of
handwritten computations, so is omitted. Let us indicate how it is
done. For any $i\in\Omega$, $s_i \in \cS_i$, write $\tilde{V}_i$
for $\tilde{V}_{X({\bf e_i}(s_i))}$, $I_i$ for $I_{X({\bf
e_i}(s_i))}$, and so on. Taking $j\neq k$, $s_j \in \cS_j, s_k \in
\cS_k$, operate with
$$\tilde{V}_k (I_k\otimes \tilde{V}_j)(t_{j,k}\otimes I_J)(I_j \otimes \tilde{V}_k^*)$$
and with
$$\tilde{V}_k \tilde{V}_k^* \tilde{V}_j $$
on a typical element of $X({\bf e_j}(s_j)) \otimes  K$ of the
form: \be\label{eq:element} x \otimes \hat{V}_s (\delta_s \cdot
x_s \otimes h) , \ee to see that what you get is the same. One has
to separate the cases where ${\bf e_k}(s_k) \leq s$ and ${\bf
e_k}(s_k) \nleq s$ (this is the case where the fact that $\hat{V}$
is a doubly commuting semigroup comes in). Because $\tilde{V}_k$
is an isometry, and the elements (\ref{eq:element}) span $X({\bf
e_j}(s_j)) \otimes  K$, one has
$$\tilde{V}_k^* \tilde{V}_j =  (I_k\otimes \tilde{V}_j)(t_{j,k}\otimes I_J)(I_j \otimes \tilde{V}_k^*) .$$
That will conclude the proof.
\end{proof}

\section{A sufficient condition for the existence of a regular isometric dilation}\label{sec:further}
Using the above methods, one can, quite easily, arrive at the
following result, which is, for the case $\cS = \mathbb{N}^k$, one half of Theorem 3.5 of \cite{S06}.
\begin{theorem}\label{thm:reg}
Let $X$ be a product system over $\cS$, and let
$T$ be a representation of $X$. If
\be\label{eq:NS}
\sum_{u\subseteq v}(-1)^{|u|}\left(I_{s[v]-s[u]} \otimes \tilde{T}^*_{s[u]}\tilde{T}_{s[u]} \right) \geq 0
\ee
for all finite subsets $v\subseteq \Omega$ and all $s\in\cS$, then $T$ has a regular isometric dilation.
\end{theorem}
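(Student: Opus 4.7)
The strategy is to reduce to Proposition \ref{prop:mainprop}: construct $\hat{T}$ from $T$ as in Section \ref{sec:rep}, verify that $\hat{T}$ satisfies the classical Brehmer positivity condition on $\cH$, invoke the known dilation criterion for contractive semigroups over $\cS$ (a consequence of Section I.9 of \cite{SzNF70} recorded in \cite{Shalit07b}), and then apply Proposition \ref{prop:mainprop} to transfer the dilation back to $T$.

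The key preliminary computation is the action of $\hat{T}_s^* \hat{T}_s$ on each block $X(t) \otimes H$ of $\cH$. Since $\hat{T}_s$ annihilates $\delta_t \cdot \xi$ whenever $t \ngeq s$, and on blocks with $t \geq s$ it factors through the isomorphism $X(t) \cong X(t-s) \otimes X(s)$ as $I_{X(t-s)} \otimes \tilde{T}_s$, one obtains
\begin{equation*}
\hat{T}_s^* \hat{T}_s \big|_{X(t) \otimes H} =
\begin{cases}
I_{X(t-s)} \otimes \tilde{T}_s^* \tilde{T}_s, & t \geq s, \\
0, & t \ngeq s.
\end{cases}
\end{equation*}

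Next I would verify the Brehmer inequality
\begin{equation*}
\sum_{u \subseteq v} (-1)^{|u|} \hat{T}_{s[u]}^* \hat{T}_{s[u]} \geq 0
\end{equation*}
on each block of $\cH$, for every finite $v \subseteq \Omega$ and every $s \in \cS$. On the block $X(t) \otimes H$, only those $u$ with $s[u] \leq t$ contribute; writing $v' := \{i \in v : s(i) \leq t(i)\}$, these are precisely the subsets of $v'$. Using the factorisation $X(t - s[u]) \cong X(t - s[v']) \otimes X(s[v'] - s[u])$ for $u \subseteq v'$, the block sum factors as
\begin{equation*}
I_{X(t-s[v'])} \otimes \sum_{u \subseteq v'} (-1)^{|u|} \bigl( I_{s[v'] - s[u]} \otimes \tilde{T}_{s[u]}^* \tilde{T}_{s[u]} \bigr).
\end{equation*}
The inner sum is precisely the left side of (\ref{eq:NS}) with $v$ replaced by $v'$ and $s$ replaced by the element $s' \in \cS$ defined by $s'|_{v'} = s|_{v'}$ and $s'(i) = 0$ for $i \notin v'$; by hypothesis it is nonnegative, and tensoring with $I_{X(t-s[v'])}$ preserves positivity.

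Once the Brehmer condition for $\hat{T}$ is in hand, the cited dilation theorem produces a regular isometric dilation of $\hat{T}$ on some $\cK \supseteq \cH$, and Proposition \ref{prop:mainprop} delivers the required regular isometric dilation of $T$. The main delicate point is the bookkeeping in the previous paragraph: one must recognise the truncation forced by the blockwise vanishing of $\hat{T}_{s[u]}^* \hat{T}_{s[u]}$ as itself an instance of the hypothesis (\ref{eq:NS}), with the dimension reduction $v \rightsquigarrow v'$ and the restriction $s \rightsquigarrow s'$. A minor parallel treatment is needed for the $t = 0$ block, where $X(0) \otimes_\sigma H$ is identified with $H$; this causes no difficulty because there the summands collapse to alternating identities.
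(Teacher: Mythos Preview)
Your proposal is correct and follows exactly the same route as the paper: construct $\hat{T}$, show it satisfies the Brehmer positivity condition, invoke the classical dilation criterion from \cite{Shalit07b}, and apply Proposition \ref{prop:mainprop}. In fact you supply the blockwise verification of $\sum_{u\subseteq v}(-1)^{|u|}\hat{T}_{s[u]}^*\hat{T}_{s[u]}\geq 0$ in more detail than the paper, which simply asserts that it ``follows'' from (\ref{eq:NS}); your reduction $v\rightsquigarrow v'$ is precisely what is needed to make that step explicit.
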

\begin{proof}
Here are the main lines of the proof. Construct $\hat{T}$ as in section \ref{sec:rep}. From (\ref{eq:NS}), it follows that
$\hat{T}$ satisfies
$$\sum_{u\subseteq v}(-1)^{|u|}\hat{T}^*_{s[u]}\hat{T}_{s[u]} \geq 0 ,$$
for all finite subsets $v\subseteq \Omega$ and all $s\in\cS$,
which, by Proposition 3.5 and Theorem 3.6 in \cite{Shalit07b}, is
a necessary and sufficient condition for the existence of a
regular isometric dilation $\hat{V}$ of $\hat{T}$. The result now
follows from Proposition \ref{prop:mainprop}.
\end{proof}

Among other reasons, this example has been put forward to illustrate the limitations of our method. By Theorem
3.5 of \cite{S06}, when $\cS = \mathbb{N}^k$, equation (\ref{eq:NS}) is a \emph{necessary}, as well as a sufficient, condition that $T$ has a regular isometric dilation. But our contstruction ``works only in one direction", so are able to prove only sufficient conditions (roughly speaking). We believe that, using the methods of \cite{S06} combined with commensurability considerations, one would be able to show that (\ref{eq:NS}) is indeed a necessary condition for the existence of a regular isometric dilation (over $\cS$).

\section{Acknowledgements}
The author is supported by the Jacobs School of Graduate Studies
and the Department of Mathematics at the Technion - I.I.T, and by
the Gutwirth Fellowship. This research is part of the author's
PhD. thesis, done under the supervision of, and with plenty of
from, Professor Baruch Solel.

\bibliographystyle{amsplain}

\end{document}